\documentclass[12pt]{amsart}

\setlength{\textwidth}{17truecm}
\setlength{\textheight}{23truecm}
\setlength{\oddsidemargin}{0pt}
\setlength{\evensidemargin}{0pt}
\setlength{\topmargin}{-30pt}

\newcommand{\w}{\omega}

\newcommand{\U}{\mathcal U}

\newcommand{\IZ}{\mathbb Z}
\newcommand{\IN}{\mathbb N}
\newcommand{\e}{\varepsilon}
\newcommand{\Aut}{\mathrm{Aut}}
\newcommand{\Zeta}{\mathfrak Z}

\newtheorem{theorem}{Theorem}

\newtheorem{corollary}{Corollary}
\newtheorem{example}{Example}
\newtheorem{problem}{Problem}
\theoremstyle{definition}
\newtheorem{remark}{Remark}

\title{Zariski topologies on groups}
\author{Taras Banakh, Igor Protasov}
\address{Ivan Franko National University of Lviv, Ukraine and\newline Uniwersytet Humanistyczno-Pryrodniczy Jana Kochanowskiego w Kielcach, Poland}
\address{Taras Shevchenko National University of Kyiv, Ukraine}
\email{T.O.Banakh@gmail.com}
\email{I.V.Protasov@gmail.com}
\subjclass{20F70, 22A30}
\keywords{Zariski topology, topologizable group}

\begin{document}
\begin{abstract}
The $n$-th Zariski topology $\Zeta_{G^n[x]}$ on a group $G$ is generated by the sub-base consiting of the sets $\{x\in G:a_0x^{\e_1}a_1x^{\e_2}\cdots x^{\e_n}a_n\ne1\}$ where $a_0,\dots,a_n\in G$ and $\e_1,\dots,\e_n\in\{-1,1\}$.
We prove that for each group $G$ the 2-nd Zariski topology $\Zeta_{G^2[x]}$ is not discrete and present an example of a group $G$ of cardinality continuum whose 2-nd Zariski topology has countable pseudocharacter. On the other hand, the non-topologizable group $G$ constructed by Ol'shanskii has discrete 665-th Zariski topology $\Zeta_{G^{665}[x]}$.
\end{abstract}

\maketitle

In this paper we study topological properties of groups endowed with the Zariski topologies $\Zeta_{G^n[x]}$. These topologies are defined on each group $G$ as follows. Let $G[x]=G*\langle x\rangle$ be the  free product  of the group $G$ and the cyclic group $\langle x\rangle$ generated by an element $x\notin G$.  The group $G[x]$ can be written as the countable union
$$G[x]=\bigcup_{n\in\w}G^n[x].$$Here for a subset $A\subset G$ we put  $$A^0[x]=A\mbox{ and }A^{n+1}[x]=A^n[x]\cup\{w\,x\, a,w\,x^{-1}a\colon w\in A^n[x],\;a\in A\}\mbox{ for }n\in\w.$$
The elements of the subset $A^n[x]$ are called {\em monomials of degree $\le n$ with coefficients in the set $A$}. 


Each monomial $w\in G[x]$ can be thought as a map $w(\cdot):G\to G,\;w:g\mapsto w(g)$, where $w(g)$ is the image of $w$ under the group homomorphism $G[x]\to G$ that is identical on $G$ and maps the element $x\in G[x]$ onto $g\in G$. The subset $$\mathfrak Z_w=\{x\in G\colon w(x)\ne 1\}$$ is called the {\em co-zero} set of the monomial $w$. Here $1$ is the neutral element of $G$.

Any family of monomials $W\subset G[x]$ induces a topology $\Zeta_W$ on $G$, generated by the sub-base $\big\{{\Zeta_w}\colon w\in W\big\}$ consisting of co-zero sets of the monomials from $W$. The topology $\Zeta_{G[x]}$ is called the {\em Zariski topology} on $G$ by analogy with the Zariski topology well-known in algebraic geometry. This topology was studied in \cite{Mar}, \cite{DS}. For $n\in\w$ the topology $\Zeta_{G^n[x]}$ will be referred to as the {\em $n$-th Zariski topology} on $G$. It is clear that the Zariski topologies form a chain
$$\Zeta_{G^0[x]}\subset \Zeta_{G^1[x]}\subset \Zeta_{G^2[x]}\subset\dots\subset \Zeta_{G[x]}.$$
The 0-th Zariski topology $\Zeta_{G^0[x]}$ is antidiscrete while the 1-st Zariski topology $\Zeta_{G^1[x]}$ coincides with the cofinite topology on $G$ and hence satisfies the separation axiom $T_1$. The same is true for Zariski topologies $\Zeta_{G^n[x]}$ with $n\ge 1$.

 It is easy to see that for every $n\in\w$ the group $G$ endowed with the $n$th Zariski topology $\Zeta_{G^n[x]}$ is a quasi-topological group. The latter means that the group operation is separately continuous and the inversion is continuous with respect to the topology $\Zeta_{G^n[x]}$ on $G$.

As we already know, for an infinite group $G$, the topology $\Zeta_{G^1[x]}$, being cofinite, is non-discrete. The same is true for the topology $\Zeta_{G^2[x]}$. 

\begin{theorem}\label{t1} For every infinite group $G$ the 2-nd Zariski topology $\Zeta_{G^2[x]}$ is not discrete.
\end{theorem}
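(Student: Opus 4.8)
The plan is to show that the $2$-nd Zariski topology cannot be discrete by proving that the singleton $\{1\}$ (the neutral element) is not open. Since $G$ with $\Zeta_{G^2[x]}$ is a quasi-topological group (as noted in the excerpt), translations are homeomorphisms, so it suffices to show that no single point is isolated, and by homogeneity it is enough to treat the point $1$. Suppose toward a contradiction that $\{1\}$ were open; then it would be a basic open neighborhood of $1$ in the sub-base topology, hence a finite intersection of co-zero sets $\Zeta_{w_i}$ of monomials $w_i\in G^2[x]$, each satisfying $w_i(1)\ne 1$.

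Let me think about the structure of the sub-base more carefully. A monomial in $G^2[x]$ has, up to the degenerate degree-$\le 1$ cases, the form $w(x)=a_0x^{\e_1}a_1x^{\e_2}a_2$ with $a_0,a_1,a_2\in G$ and $\e_1,\e_2\in\{-1,1\}$. The condition $1\in\Zeta_{w_i}$, i.e. $w_i(1)\ne 1$, just says the product of the coefficients is nontrivial; the condition that $\Zeta_{w_i}$ omits every point $g\ne 1$ would force $w_i(g)=1$ for all $g\ne 1$, i.e. $a_0g^{\e_1}a_1g^{\e_2}a_2=1$ identically in $g\in G\setminus\{1\}$. So first I would reduce to a \emph{single} defining equation by observing that $\{1\}$ open means there are finitely many monomials $w_1,\dots,w_k\in G^2[x]$ with $\bigcap_i\Zeta_{w_i}=\{1\}$, i.e. for every $g\ne 1$ some $w_i$ vanishes at $g$. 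The key combinatorial step is then to show that a finite family of degree-$\le 2$ ``laws with parameters'' cannot cover all of $G\setminus\{1\}$ when $G$ is infinite.

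The central argument I would carry out is to count solution sets. For a fixed monomial $w(x)=a_0x^{\e_1}a_1x^{\e_2}a_2$ I would analyze the zero set $\{g:w(g)=1\}$, i.e. $\{g:a_0g^{\e_1}a_1g^{\e_2}a_2=1\}$. Rewriting, this is an equation of the form $g^{\e_1}b\,g^{\e_2}=c$ for suitable $b,c\in G$ depending on the $a_j$. If $\e_1=\e_2$ this is $g b g=c$ or $g^{-1}bg^{-1}=c$; if $\e_1\ne\e_2$ it is a conjugacy-type equation $g^{-1}bg=c$ or $gbg^{-1}=c$. I expect that in each case the zero set, while possibly infinite (e.g. the centralizer-type sets in the conjugacy case), is constrained enough that finitely many of them cannot exhaust an infinite group while simultaneously each missing the point $1$. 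The cleanest route is to pass to a well-chosen countable or two-generated subgroup, or to exhibit an explicit infinite family of elements avoiding all the finitely many zero sets; alternatively, one reduces to the free-product structure of $G[x]$ and uses that a nontrivial reduced word of degree $\le 2$ has ``few'' specializations equal to $1$.

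The hard part will be handling the conjugacy-type zero sets, since $\{g:g^{-1}bg=c\}$ can be an entire coset of a centralizer and hence infinite, so a naive cardinality count of the combined zero sets does not immediately beat $|G|$. To overcome this I would argue that even such a coset, together with the other finitely many zero sets, cannot cover $G\setminus\{1\}$: I would look for a substructure of $G$ on which all the relevant $b,c$ behave controllably — for instance restricting to an abelian or to a suitably large subgroup where the conjugacy equations degenerate to at most finitely many solutions — and then derive the contradiction there. The main obstacle is precisely ensuring that this localization is available in an \emph{arbitrary} infinite group; I anticipate that a careful case split on whether $G$ contains elements of infinite order versus being locally finite, combined with choosing test elements of large order or from an infinite independent set, will close all cases and show $\{1\}$ cannot be open, whence $\Zeta_{G^2[x]}$ is not discrete.
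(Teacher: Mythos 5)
Your reduction is fine as far as it goes: discreteness of $\Zeta_{G^2[x]}$ would give finitely many monomials $w_1,\dots,w_k\in G^2[x]$ with $w_i(1)\ne1$ whose zero sets cover $G\setminus\{1\}$, and you correctly isolate the two problematic types of quadratic equations --- the conjugacy type $x^{-1}bx=c$, whose solution set is a coset of a centralizer, and the ``twisted'' type $xbx=c$, whose solution set is a translate of the set of elements inverted by a fixed inner automorphism --- both of which can be infinite. But at exactly this point the proposal stops being a proof: the final paragraph only says you ``expect'' and ``anticipate'' that a case split will close the argument, and the specific devices you float do not work as stated. Restricting to an abelian (or otherwise well-chosen) subgroup $H$ does not make the conjugacy equations degenerate, because the coefficients $b,c$ need not lie in $H$; the trace of $\{x:x^{-1}bx=c\}$ on $H$ is still a coset of $H\cap Z_G(b)$ and can be infinite. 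Moreover, an arbitrary infinite group need not contain an infinite abelian subgroup at all (Tarski monsters), and the dichotomy ``contains an element of infinite order versus locally finite'' does not exhaust the infinite torsion groups that are not locally finite. So the central claim --- that finitely many quadratic zero sets, each avoiding $1$, cannot cover $G\setminus\{1\}$ --- is asserted, not established. An elementary route does exist in outline (show each zero set is finite or a coset of a subgroup avoiding $1$, then apply B.\,H.~Neumann's covering lemma), but turning the twisted sets into cosets requires real work and a case analysis you have not supplied.

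The paper sidesteps this combinatorics entirely by quoting Zelenyuk's theorem: every countable group carries a non-discrete Hausdorff (even regular) quasitopological group topology $\tau$. One first replaces $G$ by a countable subgroup $H$ containing all coefficients of the $w_i$. The key observation, special to degree $2$, is that $w(x)=a_0x^{\e_1}a_1x^{\e_2}a_2=1$ if and only if the two degree-one maps $x\mapsto a_0x^{\e_1}a_1$ and $x\mapsto a_2^{-1}x^{-\e_2}$ agree at $x$; since they disagree at $x=1$ and are $\tau$-continuous, Hausdorffness of $\tau$ yields a $\tau$-neighborhood $U_w$ of $1$ with $U_w\subset\Zeta_w$, and then $\bigcap_{i}U_{w_i}\subset\{1\}$ contradicts the non-discreteness of $\tau$. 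To salvage your approach you must either import a result of this strength or actually carry out the covering analysis; as written, the decisive step is missing.
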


\begin{proof} Assuming the converse, we would find a finite subset $W\subset G^2[x]$ such that $\{1\}=\bigcap_{w\in W}\Zeta_w$. Find a countable subgroup $H\subset G$ such that $W\subset H*\langle x\rangle =H[x]$. By \cite{Zel}, the group $H$ is a quasitopological group with respect to some non-discrete Hausdorff (even regular) topology $\tau$.

We claim that for each monomial $w=a_0x^{\e_1}a_1x^{\e_2}a_2\in W\subset H^2[x]$ with $\e_1,\e_2\in\{-1,1\}$ the cozero set $\Zeta_w$ contains a neighborhood $U_w\in\tau$ of $1$. Since $1\in\Zeta_w$, we get $a_01^{\e_1}a_11^{\e_2}a_2\ne1$ and hence $a_01^{\e_1}a_1\ne a_2^{-1}1^{-\e_2}$. Since the topology $\tau$ on $H$ is Hausdorff, the points $a_01^{\e_1}a_1$ and $a_2^{-1}1^{-\e_2}$ have disjoint neighborhoods $O(a_01^{\e_1}a_1),O(a_2^{-1}1^{-\e_2})\in\tau$. Since $(H,\tau)$ is a quasitopological group, there is a neighborhood $U_w\in\tau$ of $1$ such that $a_0U_w^{\e_1}a_1\subset O(a_01^{\e_1}a_1)$ and $a_2^{-1}U_w^{-\e_2}\subset O(a_2^{-1}1^{-\e_2})$. For this neighborhood $U_w$ we get $1\notin a_0U_w^{\e_1}a_1U_w^{\e_2}a_2$ and $U_w\subset\Zeta_w$. Now we see from $\bigcap_{w\in W}U_w\subset \bigcap_{w\in W}\Zeta_w=\{1\}$ that $1$ is an isolated point of $H$ in the topology $\tau$, which is a contradiction.
\end{proof}

Theorem~\ref{t1} implies that for each infinite group $G$ the quasitopological group $(G,\Zeta_{G^2[x]})$ has infinite pseudocharacter. By the {\em pseudocharacter} $\psi_x(X)$ of a topological $T_1$-space at a point $x\in X$ we understand the smallest cardinality $|\U_x|$ of a family $\U_x$ of neighborhoods of the point $x$ such that $\cap\U_x=\{x\}$. The cardinal $\psi(X)=\sup_{x\in X}\psi_x(X)$ is called the {\em pseudocharacter} of the topological space $X$. 

Writing Theorem~\ref{t1} in terms of the pseudocharacter, we get

\begin{corollary}\label{c1} For any infinite group $G$ we get $$\psi(G,\Zeta_{G^1[x]})=|G|\mbox{  and }\psi(G,\Zeta_{G^2[x]})\ge\aleph_0.$$
\end{corollary}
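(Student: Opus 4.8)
The plan is to treat the two assertions separately, each reducing to a short observation. For the first equality I would use the fact, recalled above, that $\Zeta_{G^1[x]}$ is exactly the cofinite topology on $G$. Fix a point $g\in G$. A neighborhood of $g$ in this topology contains a cofinite open set, whose complement is a finite subset of $G\setminus\{g\}$; conversely every cofinite set containing $g$ is open. Hence a family $\U_g$ of neighborhoods of $g$ satisfies $\bigcap\U_g=\{g\}$ if and only if the finite complements $G\setminus U$, $U\in\U_g$, cover $G\setminus\{g\}$. Since $G$ is infinite of cardinality $|G|$, the set $G\setminus\{g\}$ also has cardinality $|G|$, and the minimal number of finite sets needed to cover a set of infinite cardinality $|G|$ equals $|G|$ (any family of fewer than $|G|$ finite sets has union of cardinality $<|G|$, while $|G|$ singletons suffice). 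Therefore $\psi_g(G,\Zeta_{G^1[x]})=|G|$ for every $g$, and taking the supremum over $g\in G$ gives $\psi(G,\Zeta_{G^1[x]})=|G|$.

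For the inequality $\psi(G,\Zeta_{G^2[x]})\ge\aleph_0$ I would invoke Theorem~\ref{t1} together with the general observation that, in any $T_1$-space $X$, a point $x$ has finite pseudocharacter $\psi_x(X)$ if and only if $x$ is isolated. Indeed, if $\{N_1,\dots,N_k\}$ is a finite family of neighborhoods of $x$ with $\bigcap_{i}N_i=\{x\}$, then choosing open sets $O_i$ with $x\in O_i\subset N_i$ we get $\{x\}\subset\bigcap_i O_i\subset\bigcap_i N_i=\{x\}$, so $\{x\}=\bigcap_i O_i$ is open and $x$ is isolated; the converse is trivial. Consequently any $T_1$-space with $\psi(X)<\aleph_0$ is discrete. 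Since Theorem~\ref{t1} asserts that $(G,\Zeta_{G^2[x]})$ is not discrete, its pseudocharacter must be infinite, i.e. $\psi(G,\Zeta_{G^2[x]})\ge\aleph_0$.

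Neither step presents a genuine obstacle once Theorem~\ref{t1} is available: the first is a routine cardinal computation for the cofinite topology, and the second is the elementary fact that a finite intersection of open neighborhoods is again open. The only point requiring care is the cardinal arithmetic in the first part, specifically the claim that fewer than $|G|$ finite sets cannot cover $G\setminus\{g\}$, which I would justify by the estimate $\lambda\cdot\aleph_0=\max(\lambda,\aleph_0)$ for the cardinality of a union of $\lambda$ finite sets.
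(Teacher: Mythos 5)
Your proposal is correct and follows the same route the paper intends: the paper derives the corollary directly from the identification of $\Zeta_{G^1[x]}$ with the cofinite topology (whose pseudocharacter at each point is $|G|$ by the cardinal count you give) and from Theorem~\ref{t1} via the standard fact that in a $T_1$-space a point of finite pseudocharacter is isolated. You have merely written out the routine details that the paper leaves implicit, and both steps are carried out correctly.
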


It is interesting to observe that the lower bound $\psi(G,\Zeta_{G^2[x]})\ge\aleph_0$ in this corollary cannot be improved to $\psi(G,\Zeta_{G^2[x]})=|G|$.

\begin{theorem}\label{ex1} There is a group $G$ of cardinality continuum such that $\psi(G,\Zeta_{G^2[x]})=\aleph_0$. In fact, the group $G$ contains two disjoint countable subsets $A,B\subset G$ such that $\{1\}=\bigcap_{w\in W}\Zeta_w$ where $W=\{xax^{-1}b^{-1}:a\in A,\;b\in B\}$.
\end{theorem}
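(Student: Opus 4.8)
The plan is to realize $G$ as the symmetric group $\mathrm{Sym}(\w)$ of all permutations of $\w$, which has cardinality continuum, and to exhibit the required sets $A,B$ explicitly. First I note that for a monomial $w=xax^{-1}b^{-1}$ (which has degree $2$, being of the form $a_0x^{\e_1}a_1x^{\e_2}a_2$ with $a_0=1$, $\e_1=1$, $a_1=a$, $\e_2=-1$, $a_2=b^{-1}$, so that $\Zeta_w$ is sub-basic open in $\Zeta_{G^2[x]}$) the cozero set is $\Zeta_w=\{g\in G: gag^{-1}\ne b\}$, whence
$$\bigcap_{w\in W}\Zeta_w=\{g\in G:\ gag^{-1}\notin B\text{ for all }a\in A\}.$$
Thus the statement $\bigcap_{w\in W}\Zeta_w=\{1\}$ is equivalent to the purely group-theoretic condition that every $g\ne1$ conjugates some element of $A$ into $B$, while $1$ does not; the latter is automatic once $A\cap B=\emptyset$, since $1\cdot a\cdot1=a\notin B$ for $a\in A$.

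For the elements of $A$ I would take, for $n\in\w$, the finite cycle $a_n=(0\,1\,2\cdots n{+}1)\in\mathrm{Sym}(\w)$ of length $n+2$ supported on $\{0,\dots,n+1\}$, and set $A=\{a_n:n\in\w\}$. Since the $a_n$ have pairwise distinct cycle lengths they are pairwise non-conjugate, and each has a countable conjugacy class $\mathcal O_n$, namely the set of all $(n{+}2)$-cycles on $\w$. I would then define $B=\big(\bigcup_{n\in\w}\mathcal O_n\big)\setminus A$, i.e. the set of all nontrivial finite cycles on $\w$ with the chosen representatives removed; this $B$ is countable and, by construction and pairwise non-conjugacy, disjoint from $A$.

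The heart of the proof is the claim that the centralizers intersect trivially, $\bigcap_{n\in\w}C_G(a_n)=\{1\}$. Here I would argue that any $g$ commuting with all $a_n$ must fix $0$: commuting with $a_0=(0\,1)$ forces $\{g(0),g(1)\}=\{0,1\}$, and commuting with $a_1=(0\,1\,2)$ then rules out $g(0)=1$, leaving $g(0)=0$. Once $g(0)=0$ is known, commuting with $a_m=(0\,1\cdots m{+}1)$ means that $(g(0),\dots,g(m{+}1))$ is a cyclic rotation of $(0,\dots,m{+}1)$ beginning with $0$, hence the trivial rotation, so $g$ fixes every point of $\{0,\dots,m+1\}$; as $m$ ranges over $\w$ this gives $g=1$.

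Granting the claim, the argument closes quickly. For $g\ne1$ the triviality of the common centralizer yields an $n$ with $ga_ng^{-1}\ne a_n$; since $ga_ng^{-1}$ is again an $(n{+}2)$-cycle, it lies in $\mathcal O_n\setminus\{a_n\}\subset B$, so $g$ conjugates $a_n\in A$ into $B$ and therefore $g\notin\bigcap_{w\in W}\Zeta_w$. As $1\in\bigcap_{w\in W}\Zeta_w$ by the remark above, we conclude $\bigcap_{w\in W}\Zeta_w=\{1\}$ with $W$ countable, so $\psi_1(G,\Zeta_{G^2[x]})\le\aleph_0$; homogeneity of the quasitopological group $(G,\Zeta_{G^2[x]})$ together with Corollary~\ref{c1} then gives $\psi(G,\Zeta_{G^2[x]})=\aleph_0$. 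I expect the main obstacle to be verifying the key claim $\bigcap_{n\in\w}C_G(a_n)=\{1\}$, that is, choosing the supports of the cycles $a_n$ rigidly enough that no nontrivial permutation can simultaneously centralize all of them, while still keeping every class $\mathcal O_n$ countable and the $a_n$ pairwise non-conjugate so that $A\cap B=\emptyset$.
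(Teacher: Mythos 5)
Your proof is correct, but it takes a genuinely different route from the paper's. The paper realizes $G$ as the semidirect product of the free group $F(T)$ over the binary tree $T$ with $\Aut(T)$: the set $A$ is assembled from the conjugacy class $L^{F(T)}$ of the ``left\nobreakdash-successor'' words together with a diagonal sequence $(a_n)$ chosen against an enumeration $(w_n)$ of $F(T)\setminus\{1\}$, and $B$ symmetrically from $R^{F(T)}$ and the elements $b_n=w_na_nw_n^{-1}$; nontrivial tree automorphisms are then handled by the $L/R$ classes and nontrivial elements of $F(T)$ by the diagonal pairs. You instead take $G=\mathrm{Sym}(\w)$ and isolate a cleaner sufficient condition: a countable set $A=\{a_n:n\in\w\}$ of pairwise non-conjugate elements whose conjugacy classes $\mathcal O_n$ are countable and whose common centralizer is trivial, with $B=\bigl(\bigcup_{n\in\w}\mathcal O_n\bigr)\setminus A$; then for $g\ne 1$ some $a_n$ is moved by conjugation and $ga_ng^{-1}\in\mathcal O_n\setminus\{a_n\}=\mathcal O_n\setminus A\subset B$. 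All the supporting steps check out: each $\mathcal O_n$ is exactly the countable set of $(n{+}2)$-cycles (conjugacy in an infinite symmetric group is by cycle type), your argument that the nested cycles $(0\,1\cdots n{+}1)$ have trivial common centralizer is sound, and the passage from $\psi_1\le\aleph_0$ to $\psi(G,\Zeta_{G^2[x]})=\aleph_0$ via homogeneity of the quasitopological group and Corollary~\ref{c1} is the same implicit step the paper uses. Your version is arguably more elementary and yields a reusable criterion (any group of size continuum containing such a family $(a_n)$ works), whereas the paper's construction is tailored to the tree action; both establish the theorem in full, including the explicit disjoint countable sets $A,B$.
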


\begin{proof} Let $T=\bigcup_{n\in\w}\{0,1\}^n$ be the binary tree (consisting of finite binary sequences) and $\Aut(T)$ be its automorphism group (which has cardinality of continuum). The action of $\Aut(T)$ on $T$ induces an action of $\Aut(T)$ on the free group $F(T)$ over $T$. In the free group $F(T)$ consider the set $L$ of words of the form $al$ where $a\in T$ and $l=a\hat{\phantom{o}}0$ is the ``left'' successor of $a$ in the binary tree $T$. By symmetry, let $R\subset F(T)$ be the set of words $ar$ where $a\in T$ and  $r=a\hat{\phantom{o}}1$ is the ``right'' successor of $a$ in $T$. It is easy to check that the sets $L^{F(T)}=\{wxw^{-1}:x\in L,\; w\in F(T)\}$ and $R^{F(T)}=\{wxw^{-1}:x\in R,\; w\in F(T)\}$ are disjoint.

Let $F(T)\setminus\{1\}=\{w_n:n\in\w\}$ be an enumeration of the set of non-unit elements of $F(T)$. By induction for every $n\in\w$ we can find points $a_n,b_n\in F(T)$ such that
\begin{itemize}
\item $b_n=w_na_nw_n^{-1}$;
\item $a_n\notin\{b_i:i\le n\}$;
\item $b_n\notin\{a_i:i\le n\}$;
\item $a_n\notin R^{F(T)}$;
\item $b_n\notin L^{F(T)}$.
\end{itemize}

Consider the countable disjoint subsets $$A=L^{F(T)}\cup\{a_n:n\in\w\}\mbox{ and }B=R^{F(T)}\cup\{b_n:n\in\w\}$$ of the free group $F(T)$. 

Let $G$ be the semidirect product of the groups $F(T)$ and $\Aut(T)$. The elements of the group $G$ are pairs $(w,f)$, where $w\in F(T)$ and $f\in\Aut(T)$. The group operation on $G$ is given by the formula:
$$(w,f)\cdot(u,g)=(w\cdot f(u),fg).$$

The groups $F(T)$ and $\Aut(T)$ are identified with the subgroups $\{(w,1):w\in F(T)\}$ and $\{(1,f):f\in\Aut(T)\}$ of $G$.

We claim that for every non-unit element $x\in G\setminus\{1\}$ we get $xAx^{-1}\cap B\ne\emptyset$. If $x\in F(T)$, then $x=w_n$ for some $n\in\w$ and then $xAx^{-1}\cap B\ni b_n=w_na_nw_n^{-1}$. If $x\notin F(T)$, then $x=(w,f)$ for some non-identity automorphism $f\in\Aut(T)$. It follows that $f(t)\ne t$ for some node $t\in T$. We can assume that $t$ has the smallest possible height. The node $t$ is not the root of the tree $T$ because the automorphism $f$ of $T$ does not move the root. 
So, $t$ has an immediate predecessor $a$ in the tree $T$. The minimality of $t$ guarantees that $f(a)=a$ and hence $\{t,f(t)\}=\{l,r\}$ where $l=a\hat{\phantom{o}}0$ and $r=a\hat{\phantom{o}}1$ are the ``left'' and ``right'' successors of $a$ in the binary tree $T$. It follows that $al\in L\subset A$  and $f(al)=f(a)f(l)=ar\in R$. Consequently,
$$\begin{aligned}
x\cdot al\cdot x^{-1}&=(w,f)\cdot(al,1)\cdot(f^{-1}(w^{-1}),f^{-1})=\\
&=(w\cdot f(al),f\cdot 1)\cdot(f^{-1}(w^{-1}),f^{-1})=(w\cdot ar\cdot w^{-1},1)\in R^{F(T)}\subset B,
\end{aligned}
$$ witnessing that $xAx^{-1}\cap B\ne\emptyset$ and hence $1\in xAx^{-1}B^{-1}$.
\end{proof}

\begin{theorem}\label{t2} If $G$ is an infinite abelian group or a free group, then $\psi(G,\Zeta_{G[x]})=|G|$. Moreover, for any family $\U\subset\Zeta_{G[x]}$ of neighborhoods of 1 with $|\U|<|G|$ the intersection $\cap\U$ contains a subgroup $H\subset G$ of cardinality $|H|=|G|$.
\end{theorem}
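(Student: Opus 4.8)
The plan is to prove the stronger ``moreover'' assertion, since $\psi(G,\Zeta_{G[x]})=|G|$ follows from it at once: as $\Zeta_{G[x]}$ refines the cofinite topology, the family $\{G\setminus\{g\}:g\ne1\}$ of size $|G|$ witnesses $\psi(G,\Zeta_{G[x]})\le|G|$, while the ``moreover'' statement shows that no family of fewer than $|G|$ neighborhoods of $1$ can have intersection $\{1\}$. The first reduction is to monomials. Each $U\in\U$ contains a basic neighborhood $\bigcap_{w\in\mathcal W_U}\Zeta_w$ of $1$ with $\mathcal W_U$ finite and $w(1)\ne1$ for every $w\in\mathcal W_U$. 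Putting $\mathcal W=\bigcup_{U\in\U}\mathcal W_U$ we obtain a union of $|\U|<|G|$ finite sets, so $|\mathcal W|<|G|$, while $1\in\bigcap_{w\in\mathcal W}\Zeta_w\subset\bigcap\U$. Hence it suffices to produce a subgroup $H\subset G$ with $|H|=|G|$ such that $w(h)\ne1$ for all $h\in H$ and $w\in\mathcal W$; then $H\subset\bigcap_{w\in\mathcal W}\Zeta_w\subset\bigcap\U$.

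For a free group $G=F(S)$ the coefficients of the $<|G|$ monomials in $\mathcal W$ involve only a subset $S_0\subset S$ of generators with $|S_0|<|G|$, so they lie in the free factor $F(S_0)$. When $S_1:=S\setminus S_0$ still has cardinality $|G|$ (the infinite-rank case), I would take $H=F(S_1)$. Writing each $w$ in reduced form in the free product $F(S_0)*\langle x\rangle$ as $u_0x^{m_1}u_1\cdots x^{m_t}u_t$ with $u_i\in F(S_0)$, $m_i\ne0$, and interior syllables $u_1,\dots,u_{t-1}\ne1$, the substitution of any nontrivial $h\in F(S_1)$ turns $w(h)$ into an alternating reduced word of the free product $F(S_0)*F(S_1)$, the syllables $h^{m_i}\ne1$ lying in $F(S_1)$ and the interior $u_i$ in $F(S_0)$; hence $w(h)\ne1$, and the case $t=0$ is the nonzero constant $u_0=w(1)$. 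The one residual case is a free group of finite rank (so $|G|=\aleph_0$), where $S_1$ may be empty; there I would instead take $H=\langle t\rangle$ for a high power $t$ of a long cyclically reduced word chosen generic with respect to the finitely many coefficients, and use a length/cancellation estimate to see that $w(h)\ne1$ for all $h\in H\setminus\{1\}$.

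For an abelian $G$ every monomial collapses to $w(x)=c_wx^{k_w}$ with $c_w=w(1)\ne1$ and $k_w\in\IZ$; discarding those with $k_w=0$ (for which $\Zeta_w=G$), the requirement $w(h)\ne1$ becomes $h^{k_w}\ne c_w^{-1}$. With $G_0=\langle c_w:w\in\mathcal W\rangle$, it then suffices to find a subgroup $H$ with $|H|=|G|$ and $H\cap G_0=\{1\}$, because $h^{k_w}\in H\cap G_0$ would force $h^{k_w}=1\ne c_w^{-1}$. When $|G|$ is uncountable we have $|G_0|<|G|$, and I would build $H=\bigcup_{\alpha<|G|}H_\alpha$ by transfinite recursion with $H_\alpha\cap G_0=\{1\}$ and $|H_\alpha|<|G|$: at stage $\alpha$, working in $G/H_\alpha$ (of cardinality $|G|$) with its subgroup $\bar G_0=(G_0+H_\alpha)/H_\alpha$ (of cardinality $<|G|$), choose a nonzero $\bar h$ with $\langle\bar h\rangle\cap\bar G_0=\{0\}$, lift it to $h_\alpha$, and set $H_{\alpha+1}=\langle H_\alpha,h_\alpha\rangle$. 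A direct check then gives $H_{\alpha+1}\cap G_0=\{1\}$, and since the chain is strictly increasing, $|H|=|G|$.

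The step I expect to be the main obstacle is exactly this selection: in an abelian group $A$ of uncountable cardinality with a subgroup $A_0$ of strictly smaller cardinality, one must find a nonzero element whose cyclic subgroup meets $A_0$ trivially. If $A/A_0$ has an element of infinite order any lift works, but in the torsion case one needs an element of $A/A_0$ admitting an order-preserving lift, and here the subgroup lattice can be hostile. The sharpest difficulty is concentrated at divisible torsion groups: in a single copy of the quasicyclic group $\IZ(p^\infty)$ every nonzero cyclic subgroup already contains the socle $\IZ/p$, so no such $\bar h$ exists; this is precisely why the countable case must be treated by hand, exploiting that the torsion subgroups $G[k]$ are finite (so each solution set $\{x:x^{k_w}=c_w^{-1}\}$ is finite) together with the explicit subgroup structure of $G$. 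Isolating the selection lemma for uncountable $A$ and separately disposing of the countable abelian case is where the real weight of the theorem lies, and I would expect these two points to demand the bulk of the argument.
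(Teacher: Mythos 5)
Your reduction to a family $\mathcal W$ of fewer than $|G|$ monomials with $w(1)\ne1$, and your treatment of the infinite-rank free case, coincide with the paper's argument: there too the coefficients are confined to a free factor $F(S_0)$ with $|S_0|<|G|$ and $H=F(S\setminus S_0)$ works (the paper only asserts $w(h)\ne1$; your normal-form computation in $F(S_0)*F(S_1)$ is the correct justification). The finite-rank free case, however, is a genuine gap as you present it. A one-variable equation $w(x)=1$ with $w(1)\ne1$ over a free group can have infinitely many solutions --- for $w(x)=xax^{-1}\cdot(bab^{-1})^{-1}$ the solution set is the whole coset $b\langle a\rangle$ --- so ``a high power of a long cyclically reduced word chosen generic'' must be arranged to avoid finitely many such parametric families, and the promised length/cancellation estimate is exactly the content that is missing. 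The paper takes a different and shorter route here: a noncommutative free group of finite rank contains a free subgroup of infinite rank, and the infinite-rank case is invoked inside that subgroup (itself a terse step, since the coefficients of $w$ need not lie in that subgroup, but at least a concrete reduction).

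The abelian case you explicitly do not finish, and the obstruction you locate is not merely a defect of your method. For $G=\IZ(p^\infty)$ the ``moreover'' clause of the theorem is actually false: every proper subgroup of $G$ is finite, yet for the single monomial $w=cx^{p}$ with $c\ne1$ divisibility gives solutions of $x^{p}=c^{-1}$, so $\Zeta_w$ is a proper (cofinite) subset of $G$ containing no subgroup of cardinality $|G|$. The paper's proof runs instead through the quoted fact that an infinite abelian group of cardinality $\kappa$ contains a subgroup isomorphic to a direct sum $\oplus_{\alpha\in\kappa}S_\alpha$ of cyclic groups, and takes $H$ to be a sub-sum disjoint from $W(1)$; that fact fails precisely for $\IZ(p^\infty)$, so the countable torsion case is a problem for the paper as well as for you. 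For uncountable $G$ your transfinite recursion can be pushed through (for instance, an uncountable abelian $p$-group has socle of full cardinality, and the torsion-free and distinct-prime components are handled by a counting argument), but the selection lemma on which every step of the recursion depends is asserted rather than proved. As written, therefore, neither the abelian half nor the finite-rank free half of the theorem is established by your argument.
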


\begin{proof} Let $\kappa=|G|$. In order to prove that $\psi(G,\Zeta_{G[x]})=|G|$, take any family of monomials $W\subset G[x]$ of size $|W|<\kappa$ with $1\in\bigcap_{w\in W}\Zeta_w$. It follows that $w(1)\ne 1$ for every $w\in W$ and hence the set $W(1)=\{w(1):w\in W\}$ is a subset of $G\setminus \{1\}$ and has cardinality $|W(1)|\le|W|<\kappa$.

If the group $G$ is abelian, then $G$ contains a subgroup $S$ of cardinality $|S|=\kappa$ that is isomorphic to the direct sum $\oplus_{\alpha\in \kappa}S_\alpha$ of cyclic groups, see \cite[\S16]{Fu}.  Since the set $S\cap W(1)$ has cardinality $<\kappa$, there is a subset $A\subset \kappa$ of cardinality $|A|<\kappa$ such that  $S\cap W(1)\subset\oplus_{\alpha\in A}S_\alpha$. Then the subgroup $H=\oplus_{\alpha\in \kappa\setminus A}S_\alpha$ has cardinality $|H|=\kappa$ and is disjoint with $W(1)$. Consequently, $H\cdot W(1)\subset G\setminus\{1\}$. We claim that $H\subset\bigcap_{w\in W}\Zeta_w$. Indeed, for every $w\in W$ and $x\in H$, by the commutativity of $G$ we get $w(x)=w(1)x^n$ for some $n\in\IZ$ and hence $w(x)\in w(1)\cdot H\subset W(1)\cdot H\subset G\setminus\{1\}$ witnessing that $H\subset \Zeta_w$.

Next, assume that $G=F(A)$ is a free group over an infinite alphabet $A\subset G$. It follows that the set $W(1)$ of cardinality $<\kappa$ lies in the subgroup $F(B)$ for some subset $B\subset A$ of cardinality $<\kappa$. Consider the subgroup $H=F(A\setminus B)$ and observe that $w(x)\ne 1$ for every $x\in H$. This means that $H\subset\bigcap_{w\in W}\Zeta_w$.

If $G=F(A)$ is a non-commutative free group over a finite alphabet $A$, then $G$ contains a free subgroup $F\subset G$ with infinitely many generators, see \cite[II.1.2]{OA}. By the preceding case the free group $F$ contains a subgroup $H\subset F$ of cardinality $|H|=\aleph_0=|G|$ such that $H\subset F\cap\bigcap_{w\in W}\Zeta_\w$. 
\end{proof}

It is interesting to compare Corollary~\ref{c1} and Theorem~\ref{ex1} with

\begin{theorem}\label{p1} For a finite subset $A$ of an infinite group $G$ and the family $$ 
W=G^1[x]\cup\{bxax^{-1}c,bx^{-1}axc:a\in A,\;b,c\in G\},$$ the group $G$ endowed with the topology $\Zeta_{W}$ is a quasitopological group with pseudocharacter $\psi(G,\Zeta_W)=|G|$.
\end{theorem}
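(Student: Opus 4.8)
The plan is to treat the two assertions separately, the quasitopological structure being routine and the pseudocharacter being the real content. For the first, I would observe that the family $W$ is invariant, at the level of co-zero sets, under inversion and under all translations. Indeed, substituting $x\mapsto x^{-1}$ interchanges the monomials $bxax^{-1}c$ and $bx^{-1}axc$ and preserves $G^1[x]$, so inversion permutes the sub-basic sets $\Zeta_w$; and for $t\in G$ the translation $x\mapsto tx$ carries $\Zeta_{bxax^{-1}c}$ to $\Zeta_{(bt^{-1})xax^{-1}(tc)}$ (similarly for the other shape and for $G^1[x]$). Hence inversion and all translations are homeomorphisms of $(G,\Zeta_W)$, which is exactly the assertion that inversion is continuous and multiplication separately continuous; the inclusion $G^1[x]\subseteq W$ makes $\Zeta_W$ a $T_1$-topology.

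Writing $\kappa=|G|$, the inequality $\psi(G,\Zeta_W)\le\kappa$ is automatic, so I would prove $\psi_1(G,\Zeta_W)\ge\kappa$. A basic neighbourhood of $1$ is the complement of a finite union of zero sets, each of which is a singleton (from $G^1[x]$), a left coset $\{x:xax^{-1}=g\}=x_0C_G(a)$ of a centraliser $C_G(a)$, or a right coset $\{x:x^{-1}ax=g\}=C_G(a)x_0$, where $a\in A$ and $g\ne a$ so that the coset avoids $1$. Thus a family $\U$ of neighbourhoods of $1$ with $|\U|<\kappa$ and $\bigcap\U=\{1\}$ would furnish a covering of $G\setminus\{1\}$ by fewer than $\kappa$ one-sided cosets of the finitely many centralisers $C_G(a)$, $a\in A$, together with fewer than $\kappa$ singletons. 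It therefore suffices to prove that no such covering exists.

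I would distinguish two cases according to $C_G(A)=\bigcap_{a\in A}C_G(a)$. If $|C_G(A)|=\kappa$ the argument is soft: each $y\in C_G(A)$ satisfies $yay^{-1}=a=y^{-1}ay$ for every $a\in A$, so $y$ lies in precisely the same conjugation-type sub-basic sets as $1$ and can be removed from a neighbourhood of $1$ only by a singleton; as fewer than $\kappa$ singletons cannot exhaust the set $C_G(A)\setminus\{1\}$ of size $\kappa$, some $y\ne1$ survives in $\bigcap\U$. If $|C_G(A)|<\kappa$, I would first pass to $H=\bigcap\{C_G(a):a\in A,\ [G:C_G(a)]<\kappa\}$; as a finite intersection of subgroups of index $<\kappa$ it has index $<\kappa$, so $|H|=\kappa$. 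A non-identity coset of a thick $C_G(a)$ is disjoint from $C_G(a)\supseteq H$, hence misses $H$ entirely, so inside $H$ the set $H\setminus\{1\}$ would be covered by fewer than $\kappa$ one-sided cosets of the subgroups $K_a=H\cap C_G(a)$ with $a\in A$ thin, plus fewer than $\kappa$ singletons. A direct index computation (using $[G:C_G(a)]=\kappa$ for thin $a$ and $[G:H]<\kappa$) shows that every nonempty intersection $\bigcap_{a\in I}K_a$ has index $\kappa$ in $H$.

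The heart of the matter, and the step I expect to be the main obstacle, is then a covering lemma of B.\,H.\,Neumann type: a group $H$ of cardinality $\kappa$ cannot be written as a union of fewer than $\kappa$ left and right cosets of subgroups all of whose nonempty intersections have index $\kappa$ in $H$, together with fewer than $\kappa$ points. For a single subgroup, or when all the subgroups are small, this is a cardinality count; the purely left-sided case is handled by Neumann's classical peeling argument, choosing an unused coset $gK_{a_1}$, forcing it into the cosets of the remaining $K_a$, and recursing on the number of subgroups via the traces $gK_{a_1}\cap hK_a$, which are cosets of $K_{a_1}\cap K_a$. Two points require care. First, right cosets are genuinely present (they encode the monomials $x^{-1}ax$), and restricting a right coset to $gK_{a_1}$ produces a coset of a conjugate $K_{a_1}\cap K_a^{\,h}$, so the induction must be organised so as to stay among subgroups of index $\kappa$. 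Second, when $\kappa$ is singular one must rule out the staircase coverings available for families of subgroups whose sizes approach $\kappa$ from below; this cannot happen here, since the subgroups that occur are intersections of conjugates of the finitely many $K_a$, so their cardinalities stay bounded below $\kappa$ or equal $\kappa$, leaving no sizes accumulating to $\kappa$. Granting this lemma, the covering is impossible in either case, a point $y\ne1$ survives in $\bigcap\U$, and $\psi(G,\Zeta_W)=\kappa$ follows.
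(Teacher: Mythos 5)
Your reduction of the pseudocharacter claim to a coset--covering statement is sound, the quasitopological part is fine, and the case $|C_G(A)|=|G|$ is handled correctly. But the proof is incomplete exactly where you say it is: the Neumann-type covering lemma is the entire content of the theorem in your organisation of the argument, and you assert it rather than prove it, explicitly flagging two unresolved difficulties (right cosets and singular $\kappa$) that are genuine. Worse, the lemma as you state it --- for a family of subgroups ``all of whose nonempty intersections have index $\kappa$'' --- is false without a finiteness restriction: partition $\omega_1$ into countably many uncountable sets $S_n$ and, in $H=\bigoplus_{\alpha<\omega_1}\IZ_2$, let $K_n$ be generated by the basis vectors indexed outside $S_n$; every nonempty intersection of the $K_n$ has index $\aleph_1=|H|$, yet $H=\bigcup_{n\in\w}K_n$ is a union of countably many such subgroups. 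Only the finiteness of $A$ can rescue the statement you actually need, and exploiting that finiteness together with the presence of right cosets is precisely the missing work. As written, the heart of the argument is absent.

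The paper avoids the covering lemma altogether by splitting the centralizers according to \emph{cardinality} rather than \emph{index}. The centralizers $Z(a)$ of full cardinality $|G|$ are intersected (finitely many times) to produce a subgroup $G_n$ with $|G_n|=|G|$ whose elements commute with those $a$, so on $G_n$ the corresponding monomials behave as at the identity --- this is your soft case, globalized. For the remaining $a$ the centralizer $Z_{G_n}(a)$ has cardinality $<|G|$, so each solution set $\{x\in G_n: xax^{-1}=d\}$ lies in a single coset of $Z_{G_n}(a)$ and hence has cardinality $<|G|$; since $A$ is finite and there are $<|G|$ relevant values of $d$, the union of all these sets and their inverses has cardinality $<|G|$, and removing it from $G_n$ leaves a symmetric set $Y$ of full cardinality inside $\bigcap_{v\in V}\Zeta_v$. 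Your dichotomy by index forces you to deal with cosets that have index $|G|$ but cardinality $|G|$, which is exactly the regime where the covering problem is hard; the paper's dichotomy by cardinality makes every bad set small and reduces the whole matter to a union bound. If you want to salvage your route, you must state the covering lemma for finitely many subgroups only and actually carry out the peeling induction with the right cosets rewritten as left cosets of conjugates; otherwise, switch to the cardinality dichotomy.
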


\begin{proof} It follows from the definition of the family $W$ that the inversion $(\cdot)^{-1}:G\to G$ and left shifts $l_b:G\to G$, $b\in G$, are continuous with respect to the topology $\tau_W$. So, $(G,\tau_W)$ is a left-topological group with continuous inversion. The continuity of right shifts follows from the continuity of the left shifts and the continuity of the inversion.
Thus $(G,\tau_W)$ is a quasitopological group. 

In order to show that $\psi(G,\Zeta_W)=|G|$, fix a family $V\subset W$ of size $|V|<|G|$ with $1\in\bigcap_{v\in V}\Zeta_v$. Each monomial $v\in V$ is of the form $b_vxa_vx^{-1}c_v$ or $b_vx^{-1}a_vxc_v$ for some $a_v\in A$ and $b_v,c_v\in G$. Let $d_v=b_v^{-1}c_v^{-1}$ and observe that $b_vxa_vx^{-1}c_v\ne 1$ if and only if $xa_vx^{-1}\ne d_v$. Since $1\in\Zeta_v$, we get $a_v\ne d_v$ for every $v\in V$. 

Let $A=\{a_1,\dots,a_n\}$ be an enumeration of the finite set $A$. For every $k\le n$ let $D_k=\{d_v:v\in V,\; a_v=a_k\}$. It follows that  $a_k\notin D_k$ and $|D_k|\le|V|<|G|$.

Let $G_0=G$ and by induction for every $k<n$ define a subgroup $G_{k+1}$ of $G$ letting $G_{k+1}=G_k$ if the centralizer
$$Z_{G_k}(a_{k+1})=\{x\in G_k:xa_{k+1}=a_{k+1}x\}$$ has size $<|G|$ and $G_{k+1}=Z_{G_k}(a_{k+1})$ otherwise. 
So, $(G_k)_{k\le n}$ is a decreasing sequence of subgroups of $G$ having size $|G|$. 

Let $K$ be the set of positive numbers $k\le n$ such that $|Z_{G_{k-1}}(a_{k})|<|G|$.
For every $k\in K$ and $d\in D_k$ consider the set $X_{k,d}=\{x\in G_n:xa_kx^{-1}= d\}$. We claim that $|X_{k,d}|<|Z_{G_n}(a_k)|\le|Z_{G_{k-1}}(a_k)|<|G|$. 
Indeed, for any $x,y\in X_{k,d}$ we get $xa_kx^{-1}=d=ya_ky^{-1}$ and thus $y^{-1}xa_k=a_ky^{-1}x$, which implies $y^{-1}x\in Z_{G_n}(a_k)$ and thus $X_{k,d}\in yZ_{G_n}(a_k)$. Then the set
$$X_k=\bigcup_{d\in D_k}X_{k,d}\cup X_{k,d}^{-1}$$ has cardinality $|X_k|\le 2\cdot |D_k|\cdot |Z_{G_n}(a_k)|<|G|$. Finally, consider the set $Y=G_n\setminus\bigcup_{k\in K}X_k$ and observe that $Y=Y^{-1}$ and $|Y|=|G_n|=|G|$. 

We claim that $Y\subset\Zeta_v$ for every $v\in V$. In the opposite case $v(y)=1$ for some $v\in V$ and some $y\in Y$. It follows that $b_vya_vy^{-1}c_v=1$ or $b_vy^{-1}a_vyc_v=1$. Since $Y=Y^{-1}$, we loose no generality assuming that $b_vya_vy^{-1}c_v=1$ and hence $ya_vy^{-1}=d_v$. Find $k\le n$ such that $a_v=k$ and then $d_v\in D_k$. If $k\notin K$, then $y\in G_n\subset G_k=Z_{G_{k-1}}(a_k)$ and then $D_k\ni d_v=ya_vy^{-1}=ya_ky^{-1}=a_k$, which contradicts $a_k\notin D_k$. 
If $k\notin K$, then $ya_ky^{-1}=d_v\in D_k$ implies that $y\in X_k$ which contradicts the choice of $y\in Y=G_n\setminus \bigcup_{k\in K}X_k$.
\end{proof}

Proposition~\ref{p1} implies the following fact, proved in \cite{BL} by the technique of ultrafilters:

\begin{corollary} For any disjoint finite subsets $A,B$ of an infinite group $G$ the set $\{x\in G:xAx^{-1}\cap B=\emptyset\}$ is infinite.
\end{corollary}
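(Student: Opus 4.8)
The plan is to realise the set $S=\{x\in G:xAx^{-1}\cap B=\emptyset\}$ as a finite intersection of co-zero sets of monomials drawn from the family $W$ of Proposition~\ref{p1}, and then to read off its infinitude from the equality $\psi(G,\Zeta_W)=|G|$. First I would take the finite set $A$ appearing in Proposition~\ref{p1} to be the given set $A$, and consider the finite subfamily
$$V=\{xax^{-1}b^{-1}:a\in A,\;b\in B\}\subset W,$$
noting that each monomial $xax^{-1}b^{-1}$ has the admissible form $bxax^{-1}c$ with leading coefficient $1\in G$ and trailing coefficient $b^{-1}\in G$, so that indeed $V\subset W$.

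Next I would observe that, for $x\in G$, the condition $xax^{-1}b^{-1}\ne1$ is exactly $xax^{-1}\ne b$; hence
$$\bigcap_{v\in V}\Zeta_v=\{x\in G:\forall a\in A,\ \forall b\in B,\ xax^{-1}\ne b\}=S.$$
Since $A$ and $B$ are disjoint, we have $ab^{-1}\ne1$ for all $a\in A$ and $b\in B$, which is to say $1\in\Zeta_v$ for every $v\in V$ and therefore $1\in S$. As $A$ and $B$ are finite, the family $V$ is finite, and in particular $|V|<|G|$.

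The final step, and the only delicate point, is to pass from $\psi(G,\Zeta_W)=|G|$ to the infinitude of $S$: the proposition only forbids cutting an intersection of co-zero sets down to the singleton $\{1\}$ using fewer than $|G|$ monomials, so I must still rule out the intermediate possibility that $S$ is a finite set strictly larger than $\{1\}$. Here I would exploit that $\Zeta_W$ is a $T_1$ topology, since $W\supset G^1[x]$ and the $1$st Zariski topology is already cofinite. Concretely, were $S=\{1,g_1,\dots,g_m\}$ finite, then for each $i\le m$ the degree-one monomial $g_i^{-1}x\in G^1[x]\subset W$ has co-zero set $\Zeta_{g_i^{-1}x}=G\setminus\{g_i\}$, which contains $1$; intersecting these with $\bigcap_{v\in V}\Zeta_v$ yields
$$\bigcap_{v\in V}\Zeta_v\cap\bigcap_{i\le m}\Zeta_{g_i^{-1}x}=\{1\},$$
an intersection of the finite subfamily $V\cup\{g_i^{-1}x:i\le m\}$ of $W$. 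This contradicts $\psi(G,\Zeta_W)=|G|\ge\aleph_0$, and so $S$ must be infinite. I expect this $T_1$-surplus argument, rather than the identification of $S$ as a co-zero intersection, to be the step that genuinely needs care.
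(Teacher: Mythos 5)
Your proposal is correct and is essentially the derivation the paper intends: the paper offers no separate argument for this corollary beyond citing Theorem~\ref{p1}, and identifying $S=\{x\in G:xAx^{-1}\cap B=\emptyset\}$ with $\bigcap_{v\in V}\Zeta_v$ for the finite family $V=\{xax^{-1}b^{-1}:a\in A,\ b\in B\}\subset W$ (with $1\in S$ because $A\cap B=\emptyset$) is exactly the right reduction. Your $T_1$-surplus step for ruling out a finite $S$ strictly larger than $\{1\}$ is valid (the homogeneity of the quasitopological group $(G,\Zeta_W)$ gives $\psi_1(G,\Zeta_W)=\psi(G,\Zeta_W)=|G|\ge\aleph_0$, and adjoining the degree-one monomials $g_i^{-1}x\in G^1[x]\subset W$ does produce a finite subfamily with intersection $\{1\}$), but it can be bypassed: the proof of Theorem~\ref{p1} actually establishes that for any subfamily $V\subset W$ of size $<|G|$ with $1\in\bigcap_{v\in V}\Zeta_v$, the intersection contains a symmetric set $Y$ with $|Y|=|G|$, so your set $S$ is not merely infinite but of full cardinality $|G|$.
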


\begin{remark} According to \cite{Pod} or \cite{Sipa2} each group $G$ with $\psi(G,\Zeta_{G[x]})=|G|$ is topologizable, that is, admits a non-discrete Hausdorff group topology.
\end{remark}

\begin{remark} In \cite{Hesse} groups $G$ with $\psi(G,\Zeta_{G[x]})\ge\kappa$ are called {\em $\kappa$-ungebunden}.
\end{remark}

Groups $G$ with non-discete $n$-th Zariski topology $\Zeta_{G^n[x]}$ have the following property:

\begin{theorem} If the $n$-th Zariski topology $\Zeta_{G^n[x]}$ on a group $G$ is non-discrete, then any finite subsets $A_0,A_1,\dots,A_n\subset G$ with $1\notin A_0A_1\cdots A_n$ there is an infinite symmetric subset $X=X^{-1}\ni 1$ such that $1\notin A_0XA_1X\cdots XA_n$.
\end{theorem}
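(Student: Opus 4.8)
The plan is to construct the set $X$ by a one-point-at-a-time induction, using non-discreteness at each step to dodge finitely many forbidden equations. I would build an increasing chain of finite symmetric subsets $1\in X_0\subset X_1\subset\cdots$ with $X_0=\{1\}$ and $X_{k+1}=X_k\cup\{y_k,y_k^{-1}\}$ for a suitably chosen new element $y_k\notin X_k\cup\{1\}$, maintaining throughout the invariant $1\notin A_0X_kA_1X_k\cdots X_kA_n$, and then set $X=\bigcup_{k\in\w}X_k$. The set $X$ is then infinite (the $y_k$ are pairwise distinct), symmetric, and contains $1$. The base case $k=0$ is exactly the hypothesis $1\notin A_0A_1\cdots A_n$, and if the invariant holds for every $k$ then $1\notin A_0XA_1X\cdots XA_n$, since any product witnessing the contrary uses only finitely many elements of $X$, all lying in a common $X_k$.

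First I would recast non-discreteness in a usable form. Since $\Zeta_{G^n[x]}$ is $T_1$ for $n\ge1$ and each set $G\setminus\{z\}=\Zeta_{z^{-1}x}$ is the co-zero set of a degree-one monomial, non-discreteness of $\Zeta_{G^n[x]}$ forces every basic neighborhood of $1$ to be infinite: for any finite $W\subset G^n[x]$ with $1\in\bigcap_{w\in W}\Zeta_w$, the set $\bigcap_{w\in W}\Zeta_w$ must be infinite, for otherwise one could intersect it with the finitely many sets $\Zeta_{z^{-1}x}=G\setminus\{z\}$ corresponding to its non-unit points $z$ and thereby isolate $1$.

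The inductive step is the heart of the matter. Having fixed $X_k$, I consider the finitely many single-variable monomials $w\in G^n[x]$ obtained as follows: pick a nonempty set $S\subseteq\{1,\dots,n\}$ of ``new'' slots, signs $\e_j\in\{-1,1\}$ for $j\in S$, coefficients $a_i\in A_i$, and old entries $z_j\in X_k$ for $j\notin S$; in the word $a_0z_1a_1z_2\cdots z_na_n$ replace $z_j$ by $x^{\e_j}$ for $j\in S$ and collect the constant blocks between consecutive occurrences of $x$, obtaining a monomial $w$ of degree $|S|\le n$. The crucial point is that each such $w$ satisfies $w(1)\ne1$: evaluating at $1$ turns every new slot into $1\in X_k$, so $w(1)$ is a product in $A_0X_kA_1\cdots X_kA_n$, which avoids $1$ by the invariant. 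Hence the finite family $W$ of all these $w$ has $1\in\bigcap_{w\in W}\Zeta_w$, so by the previous paragraph this neighborhood is infinite, and I may pick $y_k$ in it with $y_k\notin X_k\cup\{1\}$, guaranteeing $w(y_k)\ne1$ for all $w\in W$. Since every product $a_0z_1a_1\cdots z_na_n$ with $z_j\in X_{k+1}$ and at least one $z_j\in\{y_k,y_k^{-1}\}$ equals $w(y_k)$ for one of these $w$ (the sign pattern on $S$ recording which new slots use $y_k^{-1}$), while the products using only $X_k$ are excluded by the invariant, the invariant passes to $X_{k+1}$.

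The step I expect to be the main obstacle is precisely the verification that $w(1)\ne1$, since it is the pivot on which the whole induction turns: it is exactly the condition placing $1$ inside each co-zero set $\Zeta_w$, and thus the only thing that licenses the appeal to non-discreteness. This in turn depends essentially on keeping $1\in X_k$ at every stage, so that collapsing the fresh variable to $1$ re-expresses the word inside the already-controlled product set $A_0X_kA_1\cdots X_kA_n$.
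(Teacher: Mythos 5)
Your proposal is correct and follows essentially the same route as the paper: an inductive, one-point-at-a-time construction of a symmetric set containing $1$, where at each stage non-discreteness of $\Zeta_{G^n[x]}$ supplies a new point lying in the intersection of the co-zero sets of the finitely many monomials obtained by substituting $x^{\pm1}$ into some slots of the products $A_0XA_1\cdots XA_n$, the condition $w(1)\ne1$ being guaranteed by the invariant since $1\in X_k$. Your write-up is in fact slightly more explicit than the paper's (e.g.\ in justifying via the $T_1$ property that the chosen neighborhood of $1$ is infinite, so a genuinely new point can be picked), but the underlying argument is the same.
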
 

\begin{proof} By induction on $k\in\w$ we can construct a sequence $(x_k)_{k\in\w}$ of points of the group $G$, a sequence $(C_k)_{k\in\w}$ of finite subsets of $G$, and a sequence $(W_k)_{k\in\w}$ of finite subsets of $G^n[x]$ such that for every $k\in\IN$ the following conditions are satisfied:
\begin{enumerate}
\item $C_k=C_k^{-1}=(C_{k-1}\cup\{1,x_{k-1},x_{k-1}^{-1}\})^n\subset G$; 
\item $W_k=\{w\in C_k^n[x]:w(1)\ne 1\}$;
\item $w(x_k)\ne 1$ for any $w\in W_k$;
\item $x_k\notin\{x_i:i<k\}$.
\end{enumerate}
We start the inductive construction letting $x_0=1$ and $C_0=C_0^{-1}$ be any finite set containing $A_0\cup\dots\cup A_n$. For each $k$ the choice of the point $x_k$ is possible since the $n$th Zariski topology $\Zeta_{G^n[x]}$ is not discrete.

The conditions (1)--(3) of the inductive construction guarantee that the set $X=\{x_k,x_k^{-1}:k\in\w\}$ has the desired property: $1\notin A_0XA_1X\cdots XA_n$.
\end{proof}   

Looking at Theorem~\ref{t1} one may ask what happens with the Zariski topologies $\Zeta_{G^n[x]}$ for higher $n$. For sufficiently large $n$ the topology $\Zeta_{G^n[x]}$ can be discrete.

The following criterion is due to A.Markov \cite{Mar}:

\begin{theorem}[Markov] For a countable group $G$ the following conditions are equivalent:
\begin{enumerate}
\item the topology $\Zeta_{G[x]}$ is discrete;
\item the topology $\Zeta_{G^n[x]}$ is discrete for some $n\in\w$;
\item the group $G$ is non-topologizable.
\end{enumerate}
\end{theorem}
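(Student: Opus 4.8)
The plan is to establish the three equivalences by proving $(1)\Leftrightarrow(2)$ directly, the easy implication $(1)\Rightarrow(3)$, and the substantive implication $(3)\Rightarrow(1)$; since this gives both $(1)\Leftrightarrow(2)$ and $(1)\Leftrightarrow(3)$, the full equivalence follows. Throughout I would use that each $(G,\Zeta_{G^n[x]})$ and $(G,\Zeta_{G[x]})$ is a quasitopological group, so that left translations are homeomorphisms and hence any topology from this chain is discrete \emph{if and only if} the singleton $\{1\}$ is open. In turn, $\{1\}$ is open precisely when there is a finite family $W$ of monomials with $w(1)\ne1$ for all $w\in W$ and $\bigcap_{w\in W}\Zeta_w=\{1\}$.

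For $(1)\Leftrightarrow(2)$ I would argue by finiteness. A basic neighborhood of $1$ in any of these topologies is a finite intersection of sub-basic co-zero sets $\Zeta_w$. If $\Zeta_{G[x]}$ is discrete, witnessed by a finite $W\subset G[x]$ with $\bigcap_{w\in W}\Zeta_w=\{1\}$, then since $G[x]=\bigcup_{n\in\w}G^n[x]$ and $W$ is finite we have $W\subset G^n[x]$ for some $n$, so $\Zeta_{G^n[x]}$ is already discrete; this gives $(1)\Rightarrow(2)$. Conversely, the chain $\Zeta_{G^n[x]}\subset\Zeta_{G[x]}$ shows that discreteness of the coarser topology $\Zeta_{G^n[x]}$ forces discreteness of the finer topology $\Zeta_{G[x]}$, giving $(2)\Rightarrow(1)$.

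The implication $(1)\Rightarrow(3)$ rests on the observation that the Zariski topology lies below every Hausdorff group topology. Indeed, if $\tau$ is any Hausdorff group topology on $G$, then for each monomial $w$ the map $x\mapsto w(x)$ is $\tau$-continuous, being built from the continuous multiplication and inversion, while $\{1\}$ is $\tau$-closed; hence every co-zero set $\Zeta_w$, the preimage of $G\setminus\{1\}$ under $x\mapsto w(x)$, is $\tau$-open, so $\Zeta_{G[x]}\subset\tau$. Therefore, if $\Zeta_{G[x]}$ is discrete, every Hausdorff group topology on $G$ is discrete, i.e.\ $G$ is non-topologizable.

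The heart of the matter is the remaining implication $(3)\Rightarrow(1)$, which I would prove in contrapositive form: \emph{if $\Zeta_{G[x]}$ is non-discrete then $G$ is topologizable}. This is where countability is essential. First I would note that for countable $G$ non-discreteness of $\Zeta_{G[x]}$ is exactly the statement $\psi(G,\Zeta_{G[x]})=|G|=\aleph_0$, since $\{1\}$ is then the intersection of countably many, but of no finitely many, co-zero sets; from here one quick route is to invoke the topologizability criterion recalled in the remark above. The self-contained route, and the one I expect to be the main obstacle, is to build a non-discrete metrizable Hausdorff group topology directly: enumerating $G=\{g_n:n\in\w\}$ with $g_0=1$, I would construct by induction a decreasing sequence of symmetric sets $U_0\supset U_1\supset\cdots\ni1$ satisfying $U_{n+1}U_{n+1}\subset U_n$, $g_iU_{n+1}g_i^{-1}\subset U_n$ for all $i\le n$, $g_n\notin U_{n+1}$ whenever $g_n\ne1$, and $U_{n+1}\ne\{1\}$. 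The first three families of conditions guarantee that $\{U_n\}$ is a neighborhood base at $1$ of a Hausdorff group topology, while the last guarantees non-discreteness. Each constraint imposed at a single step amounts to a finite list of word-inequalities $w(x)\ne1$ with $w(1)\ne1$, so the non-discreteness of $\Zeta_{G[x]}$ is precisely what permits the selection of new points keeping $U_{n+1}$ non-trivial while respecting the squaring and conjugation requirements. The delicate point, most likely to require care, is arranging the squaring condition $U_{n+1}U_{n+1}\subset U_n$ simultaneously with conjugation-invariance and non-triviality, since products of freshly chosen points must themselves be confined by word-inequalities; controlling these interacting constraints is the technical core of Markov's argument.
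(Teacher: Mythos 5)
The paper does not actually prove this theorem: it is quoted from Markov's 1946 paper \cite{Mar} and used as a black box, so there is no internal argument to compare yours against. Judged on its own terms, your treatment of the easy implications is correct. The reduction of discreteness to openness of $\{1\}$ via translations, the finiteness argument giving $(1)\Leftrightarrow(2)$ (a finite witnessing family $W$ lies in some $G^n[x]$, and conversely the topologies form a chain), and the observation that $\Zeta_{G[x]}$ is coarser than every Hausdorff group topology (so $(1)\Rightarrow(3)$) are all sound and are the standard easy parts of Markov's criterion.

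The genuine gap is in $(3)\Rightarrow(1)$, and you have in effect flagged it yourself. Your claim that ``each constraint imposed at a single step amounts to a finite list of word-inequalities $w(x)\ne1$'' in the single new point is not true as stated: the condition $U_{n+1}U_{n+1}\subset U_n$ constrains products of \emph{pairs} of elements of $U_{n+1}$, and since $U_{n+1}$ must remain nontrivial at every later stage, it necessarily contains points chosen at future steps; a product of two such points is governed by a two-variable condition, not by a co-zero set $\Zeta_w$ of a one-variable monomial. Markov's actual argument resolves this by defining each $U_n$ as the set of values of an explicitly structured family of words in the tail $x_n,x_{n+1},\dots$ of the sequence being constructed (so that $U_{n+1}U_{n+1}\subset U_n$ and the conjugation conditions hold by the very form of the words), and then showing that the remaining separation requirements $g_j\notin U_{n+1}$, when the new point $x_m$ is being chosen, reduce to finitely many one-variable monomial inequalities because all other letters of the relevant words are already fixed group elements. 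This reduction, together with the bookkeeping that keeps the list of forbidden values finite at each stage, is the entire content of the theorem; without it your induction cannot get off the ground, since you cannot certify $U_{n+1}U_{n+1}\subset U_n$ at the moment $U_{n+1}$ is declared. The alternative route you mention --- citing the Podewski/Sipacheva criterion from the paper's remark --- would close the logical gap, but only by outsourcing precisely this construction to the literature.
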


We recall that a group $G$ is {\em non-topologizable} if $G$ admits no non-discrete Hausdorff group topology. For examples of non-topologizable groups, see \cite{Shel}, \cite{Ol80}, \cite{Ol89}, \cite{MO}, \cite{Tro1}, \cite{Tro2}, \cite{KT}. In particular, the non-topologizable group from \cite{Ol80} yields the following example mentioned in \cite{KT}:

\begin{example} There is a countable group $G$ whose 665-th Zariski topology $\Zeta_{G^{665}[x]}$ is discrete.
\end{example}

What happens for $n<665$, in particular, for $n=3$?

\begin{problem} Is the 3-d Zariski topology $\Zeta_{G^3[x]}$ non-discrete on each infinite group $G$?
\end{problem}

Also Corollary~\ref{c1} and Example~\ref{ex1} suggest:

\begin{problem} Is $\psi(G,\Zeta_{G^2[x]})\ge\log|G|$ for every infinite group $G$?
\end{problem} 

\begin{remark} By \cite{Hesse} for every infinite cardinal $\kappa$ there is a  topologizable group $G$ with $|G|=\kappa$ and $\psi(G,\Zeta_{G[x]})=\aleph_0$.
\end{remark}

\newpage

\end{document}